

\documentclass[12pt]{amsart}

\voffset=-1.4mm
\oddsidemargin=17pt \evensidemargin=17pt
\headheight=9pt     \topmargin=26pt
\textheight=576pt   \textwidth=440.8pt
\parskip=0pt plus 4pt

\usepackage{amssymb,latexsym}
\usepackage{bm}
\usepackage{mathrsfs}
\usepackage{graphicx}
\usepackage{psfrag}
\usepackage{wrapfig}
\usepackage{color}
\usepackage{enumitem}
\setlist[enumerate]{parsep=0pt plus 4pt,topsep=0pt plus 4pt}
\usepackage{url}
\usepackage{hyperref}
\usepackage{tikz}
\definecolor{darkblue}{RGB}{0,0,160}
\hypersetup{colorlinks,citecolor=black,filecolor=black,%
            linkcolor=darkblue,urlcolor=darkblue}
\allowdisplaybreaks

\newcommand{\excise}[1]{}

\newtheorem{thm}{Theorem}[section]
\newtheorem{lemma}[thm]{Lemma}

\newtheorem{cor}[thm]{Corollary}
\newtheorem{prop}[thm]{Proposition}

\theoremstyle{definition}
\newtheorem{example}[thm]{Example}
\newtheorem{remark}[thm]{Remark}
\newtheorem{defn}[thm]{Definition}

\numberwithin{equation}{section}




\newcommand{\Ring}[1]{\ensuremath{\mathbb{#1}}}


\newcommand\0{\mathbf{0}}

\newcommand\NN{\Ring{N}}

\newcommand\RR{{\mathbb R}}

\newcommand\ZZ{{\mathbb Z}}

\newcommand\ee{{\mathbf e}}

\newcommand\kk{\Bbbk}

\newcommand\xx{{\mathbf x}}

\newcommand\cA{{\mathcal A}}
\newcommand\cB{{\mathcal B}}


\newcommand\cN{N}


\newcommand\del{\partial}

\renewcommand\aa{{\mathbf a}}
\renewcommand\phi{\varphi}



\newcommand\into{\hookrightarrow}

\newcommand\onto{\twoheadrightarrow}


\newcommand\minus{\smallsetminus}

\newcommand\goesto{\rightsquigarrow}

\newcommand\nothing{\varnothing}

\renewcommand\iff{\Leftrightarrow}
\renewcommand\epsilon{\varepsilon}
\renewcommand\implies{\Rightarrow}


\newcommand\dx[1][]{\delta^{\hspace{.1ex}\xi}}

\newcommand\ol[1]{{\overline{#1}}}





 %
 %

\definecolor{lightred}{rgb}{1,.3,.3}

\newcommand\pur{\color{purple}}


\newcommand{\aoverb}[2]{{\genfrac{}{}{0pt}{1}{#1}{#2}}}
\def\twoline#1#2{\aoverb{\scriptstyle {#1}}{\scriptstyle {#2}}}

\DeclareMathOperator\Hom{Hom} 





\begin{document}

\title[Primary decomposition over partially ordered groups]
{\hspace{-9ex}Primary decomposition over partially ordered groups\hspace{-9ex}\mbox{}}
\author{Ezra Miller}
\address{Mathematics Department\\Duke University\\Durham, NC 27708}
\urladdr{\url{http://math.duke.edu/people/ezra-miller}}

\makeatletter
  \@namedef{subjclassname@2020}{\textup{2020} Mathematics Subject Classification}
\makeatother
\subjclass[2020]{Primary: 13C99, 06F20, 13A02, 20M25, 13F99, 05E40,
55N31, 13E99, 05E16, 06F05, 13P25, 62R40;
Secondary: 20M14, 62R01, 06B35, 22A25}
%

\date{10 August 2020}

\begin{abstract}
Over any partially ordered abelian group whose positive cone is closed
in an appropriate sense and has finitely many faces, modules that
satisfy a weak finiteness condition admit finite primary
decompositions.  This conclusion rests on the introduction of basic
notions in the relevant generality, such as closedness of partially
ordered abelian groups, faces and their coprimary modules, and
finiteness conditions as well local and global support functors for
modules over partially ordered~groups.
\end{abstract}
\maketitle

\setcounter{tocdepth}{2}
\tableofcontents

\section{Introduction}\label{s:intro}

Primary decomposition yields concrete answers in combinatorial
commutative algebra, particularly in monomial \cite[Example~7.13]{cca}
and binomial \cite{eisenbud-sturmfels1996, mesoprimary, soccular}
contexts.  These answers come most naturally in the presence of a
multigrading that is positive \cite[\S8.1]{cca}.  When the grading is
by a torsion-free abelian group, positivity is equivalent to the group
being partially ordered: one element precedes another if their
difference lies in the positive cone of elements greater than~$0$.

What happens when the grading set isn't necessarily discrete?
Substantial parts of commutative algebra---%
especially homological algebra including the syzygy theorem
\cite{hom-alg-poset-mods}---generalize to modules over arbitrary
posets and have no need to rest on an underlying ring.  Alas, the part
of the theory relating to primary decomposition is not amenable to
arbitrary posets, because of a lack of natural prime ideals and
inability to localize.  Rather, the natural setting to carry out
primary decomposition is, in the best tradition of classical
mathematics \cite{birkhoff42, clifford40, riesz40}, over partially
ordered abelian groups (Definition~\ref{d:pogroup}).  Those provide an
optimally general context in which posets have a notion of ``face''
along which to localize---and to do so without altering the ambient
poset.  That is, a partially ordered group~$Q$ has an origin, namely
its identity~$\0$, and hence a positive cone~$Q_+$ of elements it
precedes.  A~\emph{face} of~$Q$ is a submonoid of $Q_+$ that is also a
downset therein (Definition~\ref{d:face}).  And as everything takes
place inside of the ambient group~$Q$, every localization of a
$Q$-module along a face (Definition~\ref{d:support}) remains a
$Q$-module.

Primary decomposition (Theorem~\ref{t:primDecomp}) expresses a given
module~$M$ as a submodule of a direct sum of coprimary modules
(Definition~\ref{d:coprimary} and
Theorem~\ref{t:elementary-coprimary}), each with an essential
submodule consisting of coprimary elements
(Definition~\ref{d:elementary-coprimary}).  Isolating all coprimary
elements functorially requires localization, after which local support
functors (Definition~\ref{d:local-support}) do the job, as in ordinary
commutative algebra and algebraic geometry.

\begin{example}\label{e:hyperbola-GD}
The downset $D$ in~$\RR^2$ consisting of all points beneath the upper
branch of the hyperbola $xy = 1$ canonically decomposes
(Theorem~\ref{t:PF}) as the union\vspace{-.4ex}%
$$%
\psfrag{x}{\tiny$x$}
\psfrag{y}{\tiny$y$}
  \begin{array}{@{}c@{}}\includegraphics[height=25mm]{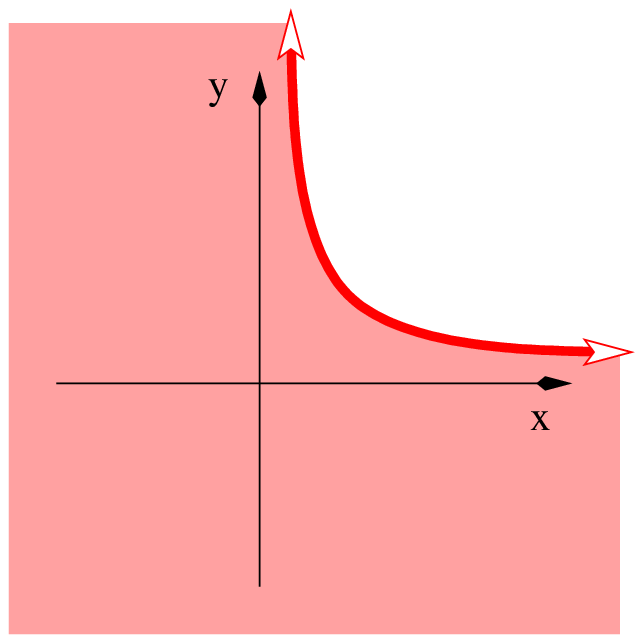}\end{array}
\ =\
  \begin{array}{@{}c@{}}\includegraphics[height=25mm]{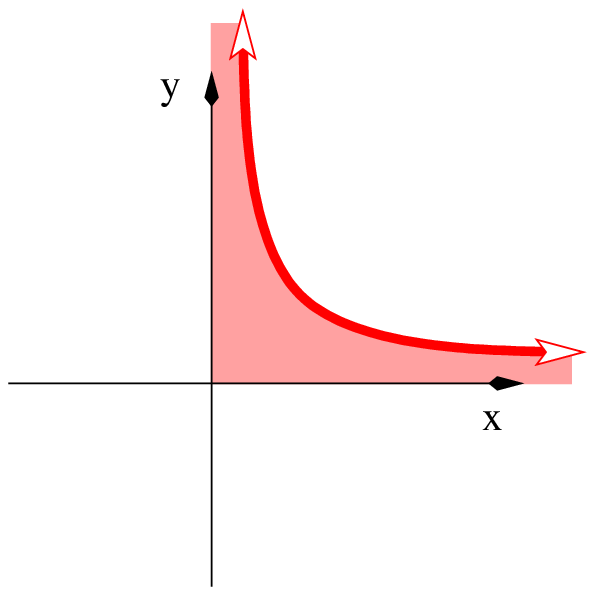}\end{array}
\cup\,
  \begin{array}{@{}c@{}}\includegraphics[height=25mm]{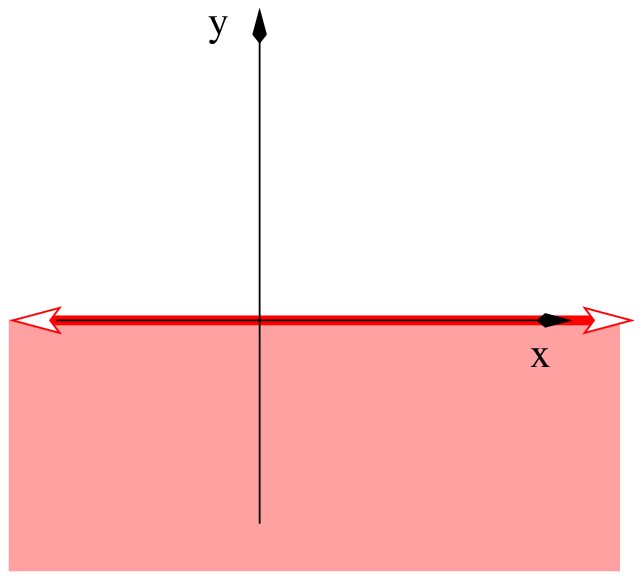}\end{array}
\cup\,
  \begin{array}{@{}c@{}}\includegraphics[height=25mm]{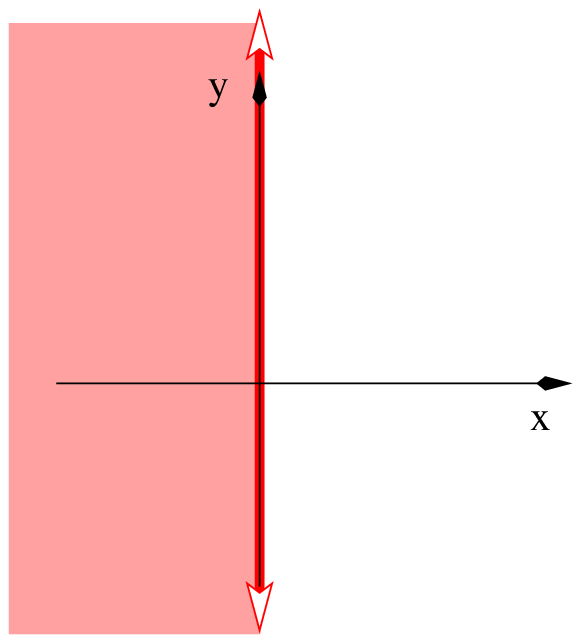}\end{array}
$$\vspace{-.6ex}%
of its subsets of coprimary elements of various types:
every red point in the
\begin{itemize}
\item%
leftmost subset on the right dies when pushed over to the right or up far enough;
\item%
middle subset dies in the \emph{localization} of~$D$ along the
$x$-axis (Definition~\ref{d:PF} or Definition~\ref{d:support}) when
pushed up far enough; and
\item%
rightmost subset dies locally along the $y$-axis when pushed over far
enough.
\end{itemize}
\end{example}

\begin{example}\label{e:hyperbola-PD}
The union in Example~\ref{e:hyperbola-GD} results in a canonical
primary decomposition
$$%
\psfrag{x}{\tiny$x$}
\psfrag{y}{\tiny$y$}
  \kk\!
  \left[
  \begin{array}{@{}c@{}}\includegraphics[height=25mm]{hyperbola}\end{array}
  \right]
\:\into\ 
  \kk\!
  \left[
  \begin{array}{@{}c@{}}\includegraphics[height=25mm]{hyperbola}\end{array}
  \right]
\oplus\,
  \kk\!
  \left[
  \begin{array}{@{}c@{}}\includegraphics[height=25mm]{x-component}\end{array}
  \right]
\oplus\,
  \kk\!
  \left[
  \begin{array}{@{}c@{}}\includegraphics[height=25mm]{y-component}\end{array}
  \right]
$$
of the downset module $\kk[D]$ over~$\RR^2$ (Corollary~\ref{c:PF}).
Elements in the lower-left quadrant locally die any type of death.
\end{example}

It bears emphasizing that primary decomposition of downset modules, or
equivalently, expressions of downsets as unions of coprimary downsets
(cogenerated by the $\tau$-coprimary elements for some face~$\tau$;
see Definition~\ref{d:primDecomp}), is canonical by Theorem~\ref{t:PF}
and Corollary~\ref{c:PF}, generalizing the canonical primary
decomposition of monomial ideals in ordinary polynomial rings.
However, notably lacking from primary decomposition theory over
arbitrary polyhedral partially ordered abelian groups is a notion of
minimality---alas, a lack that is intrinsic.

\begin{example}\label{e:minimality}
Although three death types occur in~$D$ in
Example~\ref{e:hyperbola-GD}, and hence in the union there, the final
two summands in the primary decomposition of~$\kk[D]$ in
Example~\ref{e:hyperbola-PD} are redundant.  One can, of course,
simply omit the redundant summands, but for arbitrary polyhedral
partially ordered groups no criterion is known for detecting a~priori
which summands should be omitted.
\end{example}

The failure of minimality here stems from geometry that can only occur
in partially ordered groups more general than finitely generated free
ones.  More specifically, although $D$ contains, for instance,
elements that die deaths of type ``$x$-axis'', the boundary of~$D$
fails to contain an actual translate of the face of~$\RR^2_+$ that is
the positive $x$-axis.  This can be seen as a certain failure of
localization to commute with taking homomorphisms into~$\kk[D]$
(Remark~\ref{r:commutes}); it is the source of much of the subtlety in
the theory developed in the sequel \cite{essential-real} to this
paper, whose purpose is partly to rectify, for $\RR^n$-modules
(equivalently, $\RR^n$-graded modules over the real-exponent
polynomial ring $\kk[\RR^n_+]$), the failure of minimality in
Example~\ref{e:hyperbola-PD}.

In general, the innovation in this paper is getting the hypotheses
right and selecting the appropriate one of the usually many equivalent
formulations for any given claim or proof.  The failure of
localization to commute with taking homomorphisms is a quintessential
feature of the more general theory that guides the sometimes
nonobvious choices required.

Substantial impetus for this work comes from applied topology, where
the focus is on modules over posets whose underlying partially ordered
groups are real vector spaces \cite{lesnick-interleav2015,
fruitFlyModuli, kashiwara-schapira2018, kashiwara-schapira2019,
strat-conical}; see \cite{qr-codes} for context (and an early draft of
this paper in \S3 there).  The view toward algorithmic computation
draws the focus to the case where $Q$ is \emph{polyhedral}, meaning
that it has only finitely many faces (Definition~\ref{d:face}).  This
notion is apparently new for arbitrary partially ordered abelian
groups.  Its role here is to guarantee finiteness of primary
decomposition of downset-finite modules (Theorem~\ref{t:primDecomp}).
To illuminate the meaning of this main result in the context of
persistent homology, multiparameter features can die in many ways,
persisting indefinitely as some of the parameters increase without
limit but dying when any of the others increase sufficiently.  In
persistence language, a single element in a module over a partially
ordered group can a~priori be mortal or immortal in more than one way.
But some elements die ``pure deaths'' of only a single type~$\tau$.
These are the $\tau$-coprimary elements for a face~$\tau$.  In the
concrete setting of a partially ordered real vector space with closed
positive cone, as in more general settings, a coprimary element is
characterized (Example~\ref{e:closed},
Definition~\ref{d:elementary-coprimary}, and
Theorem~\ref{t:elementary-coprimary})~as
\begin{enumerate}
\item%
\emph{$\tau$-persistent}: it lives when pushed up arbitrarily along
the face~$\tau$; and

\item%
\emph{$\ol\tau$-transient}: it eventually dies when pushed up in any
direction outside~of~$\tau$.
\end{enumerate}
Primary decomposition tells the fortune of every element: its death
types are teased apart as the ``pure death types'' of the coprimary
summands where the element lands with nonzero image.  In the ordinary
situation of one parameter, the only distinction being made here is
that a feature can be mortal or immortal.  Beyond the intrinsic
mathematical value, decomposing a module according to these
distinctions has concrete benefits for statistical analysis using
multipersistence \cite{primary-distance}.

\section{Polyhedral partially ordered groups}\label{s:polyhedral}

\begin{defn}\label{d:poset-module}
Let $Q$ be a partially ordered set (\emph{poset}) and~$\preceq$ its
partial order.  A \emph{module over~$Q$} (or a \emph{$Q$-module})
is
\begin{itemize}
\item%
a $Q$-graded vector space $M = \bigoplus_{q\in Q} M_q$ with
\item%
a homomorphism $M_q \to M_{q'}$ whenever $q \preceq q'$ in~$Q$
such that
\item%
$M_q \to M_{q''}$ equals the composite $M_q \to M_{q'} \to
M_{q''}$ whenever $q \preceq q' \preceq q''$.
\end{itemize}
A \emph{homomorphism} $M \to \cN$ of $Q$-modules is a
degree-preserving linear map, or equivalently a collection of vector
space homomorphisms $M_q \to \cN_q$, that commute with the structure
homomorphisms $M_q \to M_{q'}$ and $\cN_q \to \cN_{q'}$.
\end{defn}

The next definition, along with elementary foundations surrounding it,
can be found in Goodearl's book \cite[Chapter~1]{goodearl86}.

\begin{defn}\label{d:pogroup}
An abelian group~$Q$ is \emph{partially ordered} if it is generated
by a submonoid~$Q_+$, called the \emph{positive cone}, that has
trivial unit group.  The partial order is: $q \preceq q' \iff q' - q
\in Q_+$.  All partially ordered groups in this paper are assumed
abelian.
\end{defn}

\begin{example}\label{e:discrete-pogroup}
The finitely generated free abelian group $Q = \ZZ^n$ can be partially
ordered with any positive cone~$Q_+$, polyhedral or otherwise,
resulting in a \emph{discrete partially ordered group}.  The free
commutative monoid $Q_+ = \NN^n$ of integer vectors with nonnegative
coordinates is the most common instance and serves as a well behaved,
well known foundational case.  For notational clarity, $\ZZ_+^n$
always means the nonnegative orthant in~$\ZZ^n$, which induces the
standard componentwise partial order on~$\ZZ^n$.  Other partial orders
can be specified using notation $Q \cong \ZZ^n$ with arbitrary
positive cone~$Q_+$.
\end{example}

\begin{example}\label{e:real-pogroup}
The group $Q = \RR^n$ can be partially ordered with any positive
cone~$Q_+$, polyhedral or otherwise, closed, open (away from the
origin~$\0$) or anywhere in between, resulting in a \emph{real
partially ordered group}.  The orthant $Q_+ = \RR_+^n$ of vectors with
nonnegative coordinates is most useful for purposes such as
multipersistence.  For notational clarity, $\RR_+^n$ always means the
nonnegative orthant in~$\RR^n$, which induces the standard
componentwise partial order on~$\RR^n$.  Other partial orders can be
specified using notation $Q \cong \RR^n$ with arbitrary positive
cone~$Q_+$.
\end{example}

\begin{example}\label{e:torsion}
Definition~\ref{d:pogroup} allows the group to have torsion.  Thus the
submonoid of $\ZZ \oplus \ZZ/2\ZZ$ generated by $\bigl[\twoline
10\bigr]$ and $\bigl[\twoline 11\bigr]$
is a positive cone in the group.  There is a continuous version in
which the resulting partial order is easier to
\begin{figure}[ht]
\vspace{-1ex}
$$%
\includegraphics[height=20mm]{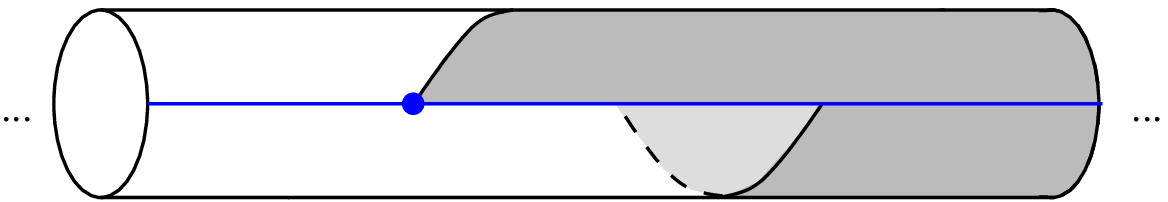}
$$
\vspace{-4ex}
\end{figure}
see geometrically: $Q = \RR \times \RR/\ZZ$ with $Q_+$ generated
by $\bigl[\twoline 10\bigr]$ and $\bigl[\twoline 11\bigr]$.  In the
figure, the blue center line is the first factor~$\RR$, with
origin~$\0$ at the fat blue dot.  The positive cone~$Q_+$ is shaded.
\end{example}

The following allows the free use of the language of either
$Q$-modules or $Q$-graded $\kk[Q_+]$-modules, as appropriate to
the context.

\begin{lemma}\label{l:Q-graded}
A module over a partially ordered abelian group~$Q$ is the same thing
as a $Q$-graded\/ module over the monoid algebra~$\kk[Q_+]$ of the
positive cone.\hfill\qed
\end{lemma}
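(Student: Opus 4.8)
The plan is to unwind both definitions and exhibit mutually inverse constructions, so that the two notions become literally isomorphic categories. Write the monoid algebra as $\kk[Q_+] = \bigoplus_{p\in Q_+} \kk\cdot t^p$ with multiplication $t^p t^{p'} = t^{p+p'}$ and $t^\0 = 1$; by definition a $Q$-graded module over it is a $Q$-graded vector space $M = \bigoplus_{q\in Q} M_q$ with $t^p M_q \subseteq M_{p+q}$ for all $p\in Q_+$ and $q\in Q$. The whole lemma is bookkeeping: the point is that the functoriality clause of Definition~\ref{d:poset-module} is exactly associativity of the $\kk[Q_+]$-action, and the partial order of Definition~\ref{d:pogroup} is exactly the one recording which monomials $t^p$ can move a given graded piece.

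First I would pass from a $Q$-module $M$ in the sense of Definition~\ref{d:poset-module} to a $\kk[Q_+]$-module: on the underlying graded vector space $\bigoplus_{q\in Q} M_q$, let $t^p$ act on $M_q$ (noting $q \preceq q+p$ because $p\in Q_+$) as the structure homomorphism $M_q \to M_{q+p}$, and extend $\kk$-linearly. This automatically satisfies $t^p M_q \subseteq M_{p+q}$, and I would check that the module axioms are precisely the clauses of Definition~\ref{d:poset-module}: additivity in the module variable is linearity of the structure maps; the unit law $t^\0\cdot m = m$ is the (standard) convention that the structure map attached to $q \preceq q$ is the identity; and associativity $t^p(t^{p'}m) = t^{p+p'}m$ for $m\in M_q$ is compatibility of the composite $M_q \to M_{q+p'} \to M_{q+p'+p}$ with the structure map $M_q \to M_{q+p+p'}$, using $q \preceq q+p' \preceq q+p'+p$.

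Conversely, given a $Q$-graded $\kk[Q_+]$-module $M = \bigoplus_{q\in Q} M_q$, I would define, whenever $q \preceq q'$ (equivalently $q' - q \in Q_+$, by Definition~\ref{d:pogroup}), the structure homomorphism $M_q \to M_{q'}$ to be multiplication by $t^{q'-q}$; this lands in $M_{q+(q'-q)} = M_{q'}$, is $\kk$-linear, restricts to the identity when $q = q'$ since $t^\0 = 1$, and satisfies the composite condition because $t^{q''-q} = t^{q''-q'} t^{q'-q}$ whenever $q \preceq q' \preceq q''$. These two constructions are mutually inverse: each recovers the structure maps, respectively the $t^p$-action, from which the full $Q$-module, respectively $\kk[Q_+]$-module, structure is determined — the latter because $\kk[Q_+]$ is generated as a $\kk$-algebra by $\{t^p : p\in Q_+\}$. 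Finally, a degree-preserving $\kk$-linear map $M \to \cN$ commutes with all structure maps if and only if it commutes with multiplication by every $t^p$, i.e.\ is $\kk[Q_+]$-linear, so morphisms correspond and the categories are isomorphic. There is no genuine obstacle here; the only step requiring care is matching the functoriality clause with associativity of the action and pinning down the convention that reflexive structure maps are identities.
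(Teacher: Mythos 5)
Your proposal is correct and matches the paper, which states this lemma with an immediate \qed precisely because the content is the definitional bookkeeping you carry out: the $t^p$-action corresponds to the structure maps, associativity corresponds to the composite condition, and morphisms match on both sides. The one point you rightly flag --- that the structure map attached to $q \preceq q$ is the identity --- is the standard functorial convention implicit in Definition~\ref{d:poset-module}, so nothing is missing.
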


\begin{example}\label{e:ZZn-graded}
When $Q = \ZZ^n$ and $Q_+ = \NN^n$, the relevant monoid algebra is the
polynomial ring $\kk[\NN^n] = \kk[\xx]$, where $\xx = x_1,\dots,x_n$
is a sequence of $n$ commuting~variables.  This is the classical case;
see \cite[\S8.1]{cca}, for example.
\end{example}

Primary decomposition of $Q$-modules depends on certain finiteness
conditions.  In ordinary commutative algebra, where $Q = \ZZ^n$, the
finiteness comes from~$Q_+$, which is assumed to be finitely generated
(so it is an \emph{affine semigroup}).  This condition implies that
finitely generated $Q$-modules are noetherian: every increasing chain
of submodules stabilizes.  Primary decomposition is then usually
derived as a special case of the theory for finitely generated modules
over noetherian rings.  But the noetherian condition is stronger than
necessary: it suffices for the positive cone to have finitely many
faces, in the following sense, along with the (much weaker)
``downset-finite'' replacement for the noetherian condition in
Section~\ref{s:prim-decomp}.  To the author's knowledge, the notion of
polyhedral partially ordered group is new, and there is no existing
literature on primary decomposition in this setting.

\begin{defn}\label{d:face}
A \emph{face} of the positive cone~$Q_+$ of a partially ordered
group~$Q$ is a submonoid $\sigma \subseteq Q_+$ such that
$Q_{+\!} \minus \sigma$ is an ideal of the monoid~$Q_+$.
Sometimes it is simpler to say that~$\sigma$ is a \emph{face}
of~$Q$.  Call~$Q$ \emph{polyhedral} if it has only finitely
many~faces.
\end{defn}

Polyhedrality suffices to prove existence of (finite) primary
decomposition (Theorem~\ref{t:primDecomp}) in the presence of weak
finiteness condition on the module being decomposed.  However, many of
the ingredients, such as localization along or taking support on a
face (Definition~\ref{d:PF}), make sense also under a different sort
of~\mbox{hypothesis}.

\begin{defn}\label{d:closed}
Let $Q$ be a partially ordered group.
\begin{enumerate}
\item%
A \emph{ray} of the positive cone~$Q_+$ is a face that is totally
ordered as a partially ordered submonoid of~$Q$.

\item%
The partially ordered group $Q$ is \emph{closed} if the complement
$Q_{+\!} \minus \tau$ of each face~$\tau$ is generated as an upset
(i.e., as an ideal) of~$Q_{+\!}$ by $\rho \minus \{\0\}$ for the
rays $\rho \not\subseteq \tau$.
\end{enumerate}
\end{defn}

\begin{example}\label{e:closed}
Any real partially ordered group (Example~\ref{e:real-pogroup})~$Q$
whose positive cone~$Q_+$ is closed in the usual topology on~$Q$
is a closed partially ordered group by the Krein--Milman theorem:
$Q_+$ is the set of nonnegative real linear combinations of vectors
on extreme rays of~$Q_+$.  For instance, a non-polyhedral closed
partial order on $Q = \RR^3$ results by taking $Q_+$ to be a cone
over a~disk, such as either half of the cone $x^2 + y^2 \leq z^2$.  In
contrast, if~$Q_+$ is an intersection of finitely many closed
half-spaces, then there are only finitely many extreme rays.  (This
case is crucial in applications---see \cite{kashiwara-schapira2019,
essential-real}, for instance.)  Even in the polyhedral case the cone
need not be rational; that is, the vectors that generate it---or the
linear functions defining the closed half-spaces whose intersection
is~$Q_+$---need not have rational entries.
\end{example}

\begin{example}\label{e:discrete-polyhedral}
Any discrete partially ordered group
(Example~\ref{e:discrete-pogroup}) whose positive cone is a finitely
generated submonoid is automatically both polyhedral and closed
\cite[Lemma~7.12]{cca}.  A discrete partially ordered group can also
have a positive cone that is not a finitely generated submonoid, such
as $Q = \ZZ^2$ with $Q_+ = C \cap \ZZ^2$ for the cone $C \subseteq
\RR^2$ generated by $\bigl[\twoline 10\bigr]$ and $\bigl[\twoline
1\pi\bigr]$.  This particular irrational cone yields a partially
ordered group that is polyhedral but not closed.  Indeed, there are
fewer than the expected faces, because only some of the faces of~$C$
result in faces of~$Q_+$ itself.  The image of~$Q$ is not discrete in
the quotient of $Q \otimes \RR$ modulo the subgroup spanned by the
irrational real face, which can have unexpected consequences for the
algebra of poset modules under localization along such a face.
\end{example}

\begin{example}\label{e:torsion'}
The cylindrical group~$Q$ in Example~\ref{e:torsion} has two faces:
the origin~$\0$ (the fat blue dot) and~$\RR_{+\!}$ (the rightmost half
of the horizontal blue center line).
\end{example}

\section{Primary decomposition of downsets}\label{s:downsets}

\begin{defn}\label{d:indicator}
Fix a poset~$Q$.  The vector space $\kk[Q] = \bigoplus_{q\in Q} \kk$
that assigns $\kk$ to every point of~$Q$ is a $Q$-module with identity
maps on~$\kk$.  More generally,
\begin{enumerate}
\item\label{i:upset}%
an \emph{upset} (also called a \emph{dual order ideal}) $U \subseteq
Q$, meaning a subset closed \mbox{under} going upward in~$Q$ (so $U +
Q_+ = U$, when $Q$ is a partially ordered group) determines an
\emph{indicator submodule} or \emph{upset module} $\kk[U] \subseteq
\kk[Q]$; and
\item\label{i:downset}%
dually, a \emph{downset} (also called an \emph{order ideal}) $D
\subseteq Q$, meaning a subset closed under going downward in~$Q$ (so
$D - Q_+ = D$, when $Q$ is a partially ordered group) determines an
\emph{indicator quotient module} or \emph{downset module} $\kk[Q]
\onto \kk[D]$.
\end{enumerate}
\end{defn}

\begin{defn}\label{d:PF}
Fix a face~$\tau$ of the positive cone~$Q_+$ in a polyhedral or
closed partially ordered group~$Q$ and a downset $D \subseteq Q$.
Write $\ZZ \tau$ for the subgroup of~$Q$ generated~by~$\tau$.
\begin{enumerate}
\item\label{i:localization}%
The \emph{localization} of~$D$ \emph{along~$\tau$} is the subset
$$%
  D_\tau = \{q \in D \mid q + \tau \subseteq D\}.
$$

\item\label{i:globally-supported}%
An element $q \in D$ is \emph{globally supported on~$\tau$} if $q
\not\in D_{\tau'}$ whenever $\tau' \not\subseteq \tau$.

\item\label{i:on-tau}%
The part of~$D$ \emph{globally supported on~$\tau$} is
$$%
  \Gamma_{\!\tau} D = \{q \in D \mid q \text{ is globally supported on }\tau\}.
$$

\item%
An element $q \in D$ is \emph{locally supported on~$\tau$} if $q$ is
globally supported on~$\tau$ in~$D_\tau$.

\item%
The \emph{local $\tau$-support} of~$D$ is the subset
$\Gamma_{\!\tau}(D_\tau) \subseteq D$ consisting of elements globally
supported on~$\tau$ in the localization~$D_\tau$.

\item\label{i:primary-component}%
The \emph{$\tau$-primary component} of~$D$ is the downset
$$%
  P_\tau(D) = \Gamma_{\!\tau}(D_\tau) - Q_+
$$
cogenerated by the local $\tau$-support of~$D$.
\end{enumerate}
\end{defn}

\begin{example}\label{e:PF}
The local $\tau$-supports of the under-hyperbola downset in
Example~\ref{e:hyperbola-GD} are the subsets depicted on the
right-hand side there, for the faces $\tau = \0$, $x$-axis, and
$y$-axis, respectively.  The corresponding primary components are
depicted in Example~\ref{e:hyperbola-PD}.  In contrast, the global
support on (say) the $y$-axis consists of the part of the local
support that sits strictly above the $x$-axis, and the global support
at~$\0$ is the part of~$D$ strictly in the positive quadrant.

This example demonstrates that the $\tau$-primary component of~$D$ in
Definition~\ref{d:PF} need not be supported on~$\tau$.  Indeed, $D =
P_\0(D)$ here, and points outside of~$Q_+$ are not supported at the
origin, being instead locally supported at either the $x$-axis (if the
point is below the $x$-axis) or the $y$-axis (if the point is behind
the $y$-axis).
\end{example}

\begin{remark}\label{r:PF}
Definition~\ref{d:PF} makes formal sense in any partially ordered
group, but extreme
caution is recommended without the closed or polyhedral assumptions.
Indeed, without such assumptions, faces can be virtually present, such
as the irrational face in Example~\ref{e:discrete-polyhedral} or a
missing face in a real polyhedron that is not closed.  In such cases,
aspects of Definition~\ref{d:PF} might produce unintended output.  The
natural generality for the concepts in
Definition~\ref{d:PF}~is~unclear.
\end{remark}

\begin{example}\label{e:coprincipal}
The \emph{coprincipal} downset $\aa + \tau - Q_+$ inside of $Q =
\ZZ^n$ \emph{cogenerated} by~$\aa$ \emph{along~$\tau$} is globally
supported along~$\tau$.  It also equals its own localization
along~$\tau$, so it equals its local $\tau$-support and is its own
$\tau$-primary component.  Note that when $Q_+ = \NN^n$, faces
of~$Q_+$ correspond to subsets of~$[n] = \{1,\dots,n\}$, the
correspondence being $\tau \leftrightarrow \chi(\tau)$, where
$\chi(\tau) = \{i \in [n] \mid \ee_i \in \tau\}$ is the
\emph{characteristic subset} of~$\tau$ in~$[n]$.  (The vector~$\ee_i$
is the standard basis vector whose only nonzero entry is $1$ in
slot~$i$.)
\end{example}

\begin{remark}\label{r:freely}
The localization of~$D$ along~$\tau$ is acted on freely by~$\tau$.
Indeed, $D_\tau$ is the union of those cosets of~$\ZZ \tau$ each of
which is already contained in~$D$.  The minor point being made here is
that the coset $q + \ZZ \tau$ is entirely contained in~$D$ as soon as
$q + \tau \subseteq D$ because $D$ is a downset: $q + \ZZ \tau = q +
\tau - \tau \subseteq q + \tau - Q_+ \subseteq D$ if~\mbox{$q + \tau
\subseteq D$}.
\end{remark}

\begin{remark}\label{r:monomial-localization}
The localization of~$D$ is defined to reflect localization at the
level of $Q$-modules: enforcing invertibility of structure
homomorphisms $\kk[D]_q \to \kk[D]_{q+f}$ for $f \in \tau$ results in
a localized indicator module $\kk[D][\ZZ \tau] = \kk[D_\tau]$; see
Definition~\ref{d:support}.
\end{remark}

\begin{example}\label{e:support}
Fix a downset $D$ in a partially ordered group~$Q$ that is closed
(Definition~\ref{d:closed} and subsequent examples).  An element $q
\in D$ is globally supported on~$\tau$ if and only if it lands outside
of~$D$ when pushed far enough up in any direction outside
of~$\tau$---that is, every $f \in Q_{+\!} \minus \tau$ has a
nonnegative integer multiple~$\lambda f$~with~$\lambda f +\nolinebreak
q \not\in D$.

One implication is easy: if every $f \in Q_{+\!} \minus \tau$ has
$\lambda f + q \not\in D$ for some $\lambda \in \NN$, then any element
$f' \in \tau' \minus \tau$ has a multiple $\lambda f' \in \tau'$ such
that $\lambda f' + q \not\in D$, so $q \not\in D_{\tau'}$.  For the
other direction, use Definition~\ref{d:closed}: $q \in \Gamma_{\!\tau}
D \implies q \not\in D_\rho$ for all rays~$\rho$ of~$Q_{+\!}$ that are
not contained in~$\tau$, so along each such ray~$\rho$ there is a
group element~$v_\rho$ with $v_\rho + q \not\in D$.  Given $f \in
Q_{+\!}  \minus \tau$, choose $\lambda \in \NN$ big enough so that
$\lambda f \succeq v_\rho$ for some~$\rho$.  This argument is the
purpose of the closed hypothesis; see the proof of
Theorem~\ref{t:elementary-coprimary}.
\end{example}

\begin{defn}\label{d:primDecomp}
Fix a downset~$D$ in a polyhedral partially ordered group~$Q$.
\begin{enumerate}
\item\label{i:coprimary}%
The downset~$D$ is \emph{coprimary} if $D = P_\tau(D)$ for some
face~$\tau$ of the positive cone~$Q_+$.  If $\tau$ needs to
specified then $D$ is called \emph{$\tau$-coprimary}.

\item%
A \emph{primary decomposition} of~$D$ is an expression $D =
\bigcup_{i=1}^r D_i$ of coprimary downsets~$D_i$, called
\emph{components} of the decomposition.
\end{enumerate}
\end{defn}

\begin{thm}\label{t:PF}
Every downset $D$ in a polyhedral partially ordered group~$Q$ is the
union $\bigcup_\tau \Gamma_{\!\tau}(D_\tau)$ of its local
$\tau$-supports for all faces $\tau$ of the positive cone.
\end{thm}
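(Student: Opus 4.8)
The plan is to prove the two inclusions separately. The inclusion $\bigcup_\tau \Gamma_{\!\tau}(D_\tau) \subseteq D$ is immediate from the definitions: each $\Gamma_{\!\tau}(D_\tau)$ is by construction a subset of $D_\tau$, which is a subset of $D$. So the content is the reverse inclusion: every $q \in D$ lies in $\Gamma_{\!\tau}(D_\tau)$ for \emph{some} face $\tau$. The idea is to start with the trivial face $\tau = \0$, for which $D_\0 = D$, so certainly $q \in D_\tau$, and then to ``improve'' the face by a maximality argument until $q$ becomes globally supported on it inside the localization.

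Concretely, first I would fix $q \in D$ and consider the set of faces $\tau$ of $Q_+$ such that $q \in D_\tau$. This set is nonempty (it contains $\0$) and, since $Q$ is polyhedral, it is finite, so it has a maximal element $\tau$ with respect to inclusion. I claim this $\tau$ works, i.e.\ $q \in \Gamma_{\!\tau}(D_\tau)$. By definition of $\Gamma_{\!\tau}$, I must show that $q$ is globally supported on $\tau$ in the localization $D_\tau$; that is, for every face $\tau' \not\subseteq \tau$, one has $q \notin (D_\tau)_{\tau'}$. Suppose not: then $q \in (D_\tau)_{\tau'}$ for some $\tau' \not\subseteq \tau$. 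The key step is to identify the iterated localization $(D_\tau)_{\tau'}$ with the single localization $D_{\tau''}$ along the face $\tau''$ generated by $\tau \cup \tau'$ — one needs to check that $\tau \cup \tau'$ generates a face (a submonoid whose complement in $Q_+$ is an ideal), and that the localization $(D_\tau)_{\tau'}$ equals $D_{\tau''}$; the latter follows because inverting the structure maps along $\tau$ and then along $\tau'$ is the same as inverting them along the subgroup $\ZZ\tau + \ZZ\tau' = \ZZ\tau''$, using Remark~\ref{r:freely} on cosets. Granting this, $q \in D_{\tau''}$ with $\tau'' \supseteq \tau$, and since $\tau' \not\subseteq \tau$ the containment is strict, contradicting maximality of $\tau$. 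Hence $q \in \Gamma_{\!\tau}(D_\tau)$, completing the proof.

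I expect the main obstacle to be the bookkeeping lemma that $\tau \cup \tau'$ generates a face and that the iterated localization collapses to a single one. The face claim is where polyhedrality (or at least the combinatorics of faces) is really used: one needs that the join of two faces in the lattice of faces of $Q_+$ is again a face, and that this join is realized as the submonoid generated by the union — this is a purely monoid-theoretic fact about complements being ideals, but it requires a short argument that an element outside the generated submonoid cannot be written as a sum of something in $Q_+$ and something in $\tau \cup \tau'$ without already lying in the submonoid. The localization identity $(D_\tau)_{\tau'} = D_{\tau \vee \tau'}$ is then the downset-level shadow of the ring-theoretic fact that localizing twice is localizing at the compositum, and it should reduce, via Remark~\ref{r:freely}, to the set-theoretic statement that a coset of $\ZZ(\tau \vee \tau')$ lies in $D$ iff $q + \tau'' \subseteq D$. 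Once these structural facts are in hand, the maximality argument is short and clean.
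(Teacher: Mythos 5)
Your overall strategy---take a face $\tau$ maximal among those with $q \in D_\tau$, which exists by polyhedrality, and derive a contradiction from $q \in (D_\tau)_{\tau'}$ for some $\tau' \not\subseteq \tau$ by exhibiting a strictly larger face $\sigma$ with $q \in D_\sigma$---is exactly the paper's (the paper simply declares that final step immediate). But the key lemma you isolate is false: the submonoid generated by $\tau \cup \tau'$, namely $\tau + \tau'$, need \emph{not} be a face of~$Q_+$, and the join in the face lattice is in general strictly larger than $\tau + \tau'$. Concretely, let $Q = \RR^3$ with $Q_+ = \{(x,y,z) : |x| \leq z \text{ and } |y| \leq z\}$, the cone over a square; this is a closed polyhedral partially ordered group. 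The opposite extreme rays $\rho = \RR_+(1,1,1)$ and $\rho' = \RR_+(-1,-1,1)$ are faces, but every element of $\rho + \rho'$ has equal first two coordinates, so $\rho + \rho'$ contains the interior point $(0,0,2) = (1,-1,1) + (-1,1,1)$ while containing neither summand; hence $Q_{+} \minus (\rho + \rho')$ is not an ideal and $\rho + \rho'$ is not a face. Your iterated-localization identity $(D_\tau)_{\tau'} = D_{\tau''}$ therefore aims at a $\tau''$ that need not be a legitimate face, and the contradiction with maximality does not yet follow.

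The gap is repairable, and the repair shows where the downset hypothesis enters (your coset argument via Remark~\ref{r:freely} does not use it and cannot suffice). Faces are closed under intersection, so there is a smallest face $\sigma$ containing $\tau \cup \tau'$; explicitly $\sigma = \{f \in Q_+ : f \preceq g + g' \text{ for some } g \in \tau,\ g' \in \tau'\}$, the downset of~$Q_+$ generated by the submonoid $\tau + \tau'$. Unwinding definitions, $q \in (D_\tau)_{\tau'}$ says precisely that $q + \tau + \tau' \subseteq D$. Given $f \in \sigma$, choose $g \in \tau$ and $g' \in \tau'$ with $f \preceq g + g'$; then $q + f \preceq q + g + g' \in D$, so $q + f \in D$ because $D$ is a downset. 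Hence $q + \sigma \subseteq D$, i.e.\ $q \in D_\sigma$, and $\sigma \supseteq \tau'$ with $\tau' \not\subseteq \tau$ forces $\sigma \supsetneq \tau$, contradicting maximality. (This argument in fact yields the full identity $(D_\tau)_{\tau'} = D_\sigma$ you wanted, but only the one inclusion is needed.)
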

\begin{proof}
Given an element $q \in D$, finiteness of the number of faces implies
the existence of a face~$\tau$ that is maximal among those such that
$q \in D_\tau$; note that $q \in D = D_\0$ for the trivial face $\0$
consisting of only the identity of~$Q$.  It follows immediately that
$q$ is supported on~$\tau$ in~$D_\tau$.
\end{proof}

\begin{cor}\label{c:PF}
Every downset $D$ in a polyhedral partially ordered group~$Q$ has a
canonical primary decomposition $D = \bigcup_\tau P_\tau(D)$, the
union being over all faces~$\tau$ of the positive cone with nonempty
support $\Gamma_{\!\tau}(D_\tau)$.
\end{cor}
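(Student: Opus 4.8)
The plan is to read the set-theoretic decomposition off Theorem~\ref{t:PF} and then do the real work proving that each $P_\tau(D)$ is coprimary in the sense of Definition~\ref{d:primDecomp}. For the decomposition itself: for every face~$\tau$ one has $\Gamma_{\!\tau}(D_\tau)\subseteq P_\tau(D)$ because $P_\tau(D)=\Gamma_{\!\tau}(D_\tau)-Q_+$ is the downset it cogenerates, while $P_\tau(D)=\Gamma_{\!\tau}(D_\tau)-Q_+\subseteq D-Q_+=D$ since $\Gamma_{\!\tau}(D_\tau)\subseteq D$ and $D$ is a downset. Taking unions over all faces and invoking Theorem~\ref{t:PF} gives $D=\bigcup_\tau\Gamma_{\!\tau}(D_\tau)\subseteq\bigcup_\tau P_\tau(D)\subseteq D$; faces with empty local support $\Gamma_{\!\tau}(D_\tau)=\nothing$ give $P_\tau(D)=\nothing$ and may be dropped. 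Canonicity then needs no argument: $D_\tau$, the global-support operator $\Gamma_{\!\tau}$, and hence $P_\tau$, are each built intrinsically from $D$ and $\tau$ with no choices, and the index set is the finite (by polyhedrality) set of all faces.

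Before the coprimality step I would record the formal bookkeeping properties of localizing a downset~$D$ along a face~$\tau$: (a) $D_\tau$ is again a downset (if $q\in D_\tau$, $q'\preceq q$, $f\in\tau$, then $q'+f\preceq q+f\in D$, so $q'+f\in D$); (b) $D_\tau$ is stable under translation by $\ZZ\tau$ (Remark~\ref{r:freely}); (c) localization is \emph{contractive}, $D_\tau\subseteq D$, and \emph{monotone}, $A\subseteq B\implies A_\tau\subseteq B_\tau$; and (d) $\Gamma_{\!\tau}(D_\tau)$ is stable under translation by~$\tau$. Property~(d) is the one point needing thought: for $q\in\Gamma_{\!\tau}(D_\tau)$, $f\in\tau$, and a face $\tau'\not\subseteq\tau$, if $q+f+\tau'\subseteq D_\tau$ then translating by $-f\in\ZZ\tau$ and using~(b) gives $q+\tau'\subseteq D_\tau$, i.e.\ $q\in(D_\tau)_{\tau'}$, contradicting $q\in\Gamma_{\!\tau}(D_\tau)$; hence $q+f\in\Gamma_{\!\tau}(D_\tau)$.

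Now set $G:=\Gamma_{\!\tau}(D_\tau)$ and $P:=P_\tau(D)=G-Q_+$ (a downset, so $P_\tau$ is a downset by~(a) and $\Gamma_{\!\tau}(P_\tau)$, $P_\tau(P)$ are defined). I would prove $P_\tau(P)=P$ via the sandwich $G\subseteq\Gamma_{\!\tau}(P_\tau)\subseteq P$, which forces $P_\tau(P)=\Gamma_{\!\tau}(P_\tau)-Q_+=G-Q_+=P$. For the upper bound, $P\subseteq D$ with monotonicity gives $P_\tau\subseteq D_\tau$, and contractivity gives $P_\tau\subseteq P$; hence $\Gamma_{\!\tau}(P_\tau)\subseteq P_\tau\subseteq P$, and as $P$ is a downset, $P_\tau(P)=\Gamma_{\!\tau}(P_\tau)-Q_+\subseteq P$. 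For the lower bound, fix $q\in G$; by~(d) $q+\tau\subseteq G\subseteq P$, so $q\in P_\tau$; and since $P_\tau\subseteq D_\tau$, monotonicity gives $(P_\tau)_{\tau'}\subseteq(D_\tau)_{\tau'}$ for every face $\tau'$, so $q\in\Gamma_{\!\tau}(D_\tau)$ (which means $q\notin(D_\tau)_{\tau'}$ for $\tau'\not\subseteq\tau$) forces $q\notin(P_\tau)_{\tau'}$ for all such $\tau'$, i.e.\ $q\in\Gamma_{\!\tau}(P_\tau)$. Thus $G\subseteq\Gamma_{\!\tau}(P_\tau)$, whence $P=G-Q_+\subseteq\Gamma_{\!\tau}(P_\tau)-Q_+=P_\tau(P)$. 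Combining the bounds, $P_\tau(D)$ is $\tau$-coprimary.

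I expect the only genuine obstacle to be property~(d), the stability of $\Gamma_{\!\tau}(D_\tau)$ under translation by~$\tau$, together with keeping straight the contractive versus monotone behaviour of localization; everything else is unwinding Definitions~\ref{d:PF} and~\ref{d:primDecomp}. It is worth flagging that this decomposition, though canonical, is generally far from minimal (cf.\ Example~\ref{e:minimality}): distinct faces can yield redundant or even equal components, and $P_\tau(D)$ may be coprimary for more than one face, which is harmless since Definition~\ref{d:primDecomp} only demands the existence of one.
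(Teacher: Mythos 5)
Your proposal is correct and follows the paper's (implicit) route: the corollary is stated without proof, the union being read directly off Theorem~\ref{t:PF} exactly as in your first paragraph, with faces having empty local support discarded. The only content the paper leaves unstated is that each $P_\tau(D)$ is genuinely $\tau$-coprimary in the sense of Definition~\ref{d:primDecomp}, i.e.\ that $P_\tau$ is idempotent, and your sandwich argument $\Gamma_{\!\tau}(D_\tau)\subseteq\Gamma_{\!\tau}\bigl(P_\tau(D)_\tau\bigr)\subseteq P_\tau(D)$---resting on the $\tau$-translation stability of $\Gamma_{\!\tau}(D_\tau)$ and the contractive/monotone behaviour of localization---verifies this correctly.
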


\begin{remark}\label{r:disjoint}
The union in Theorem~\ref{t:PF} is not necessarily disjoint.  Nor,
consequently, is the union in Corollary~\ref{c:PF}.  There is a
related union, however, that is disjoint: the sets $(\Gamma_{\!\tau}
D) \cap D_\tau$ do not overlap.  But their union need not be all
of~$D$; try Example~\ref{e:PF}, where the negative quadrant intersects
none of the sets~$(\Gamma_{\!\tau} D) \cap D_\tau$.

Algebraically, $(\Gamma_{\!\tau} D) \cap D_\tau$ should be interpreted
as taking the elements of~$D$ globally supported on~$\tau$ and then
taking their images in the localization along~$\tau$, which deletes
the elements that aren't locally supported on~$\tau$.  That is,
$(\Gamma_{\!\tau} D) \cap D_\tau$ is the set of graded degrees
from~$Q$ where the image of $\Gamma_{\!\tau}\kk[D] \to \kk[D]_\tau$ is
nonzero.
\end{remark}

\begin{example}\label{e:PF'}
The decomposition in Theorem~\ref{t:PF}---and hence
Corollary~\ref{c:PF}---is not necessarily minimal: it might be that
some of the canonically defined components can be omitted.  This
occurs, for instance, in Example~\ref{e:hyperbola-PD}.  The general
phenomenon, as in this hyperbola example, stems from geometry of the
elements in~$D_\tau$ supported on~$\tau$, which need not be bounded in
any sense, even in the quotient $Q/\ZZ \tau$.  In contrast, for
(say) quotients by monomial ideals in the polynomial
ring~$\kk[\NN^n]$, only finitely many elements have support at the
origin, and the downset they cogenerate is consequently~artinian.
\end{example}

\section{Localization and support}\label{s:local-support}

\begin{defn}\label{d:support}
Fix a face~$\tau$ of a partially ordered group~$Q$.  The
\emph{localization} of a $Q$-module~$M$ \emph{along~$\tau$} is the
tensor product
$$%
  M_\tau = M \otimes_{\kk[Q_{+\!}]} \kk[Q_{+\!} + \ZZ \tau],
$$
viewing~$M$ as a $Q$-graded $\kk[Q_{+\!}]$-module.  The
submodule of~$M\hspace{-1.17pt}$ \emph{globally
supported~on~$\tau$}~is
$$%
  \Gamma_{\!\tau} M
  =
  \bigcap_{\tau' \not\subseteq \tau}\bigl(\ker(M \to M_{\tau'})\bigr)
  =
  \ker \bigl(M \to \prod_{\tau' \not\subseteq \tau} M_{\tau'}\bigr).
$$
\end{defn}

\begin{example}\label{e:Gamma}
Definition~\ref{d:PF}.\ref{i:globally-supported} says that $1_q \in
\kk[D]_q = \kk$ lies in~$\Gamma_{\!\tau} \kk[D]$ if and only if $q \in
\Gamma_{\!\tau} D$, because $q \not\in D_{\tau'}$ if and only if $1_q
\mapsto 0$ under localization of~$\kk[D]$ along~$\tau'$.
\end{example}

\begin{example}\label{e:global-support}
The global supports of the indicator subquotient for the interval
$$%
\psfrag{x}{\tiny$x$}
\psfrag{y}{\tiny$y$}
  \begin{array}{@{}c@{}l@{}}
	{\pur .}\qquad\quad\ \\[-1.7ex]
	{\pur .}\qquad\quad\ \\[-1.7ex]
	{\pur .}\qquad\quad\ \\[-1ex]
	\includegraphics[height=29mm]{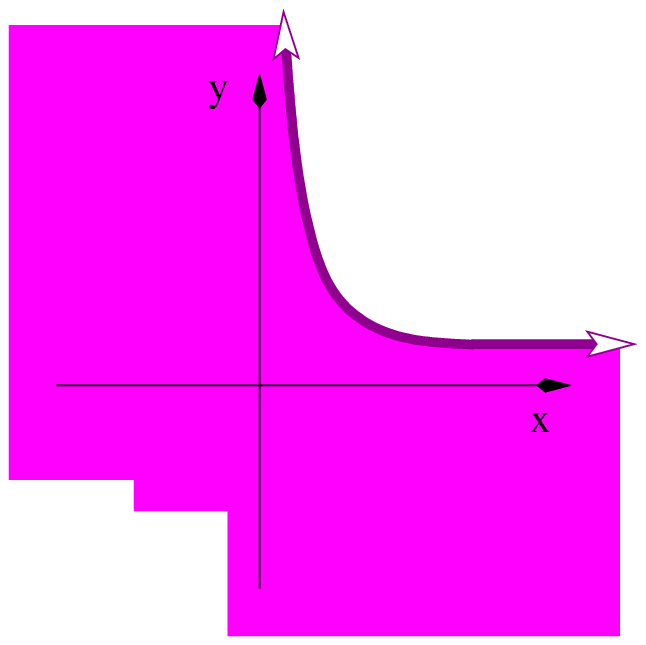}
	&\raisebox{3ex}{\pur$\!\cdot\!\cdot\!\cdot$}
	\end{array}\!\!
\quad\ \ \goesto\quad\ \
  \begin{array}{@{}c@{}}
	{\pur .}\quad\,\\[-1.7ex]
	{\pur .}\quad\,\\[-1.7ex]
	\makebox[0pt][l]{\quad$\tau = \nothing$}
	{\pur .}\quad\,\\[-1ex]
	\includegraphics[height=29mm]{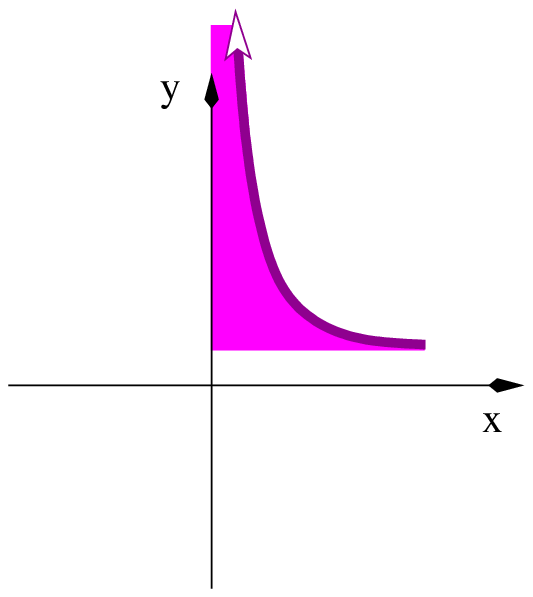}
	\end{array}
\ ,\,\quad
  \begin{array}{@{}c@{}l@{}}
	\phantom{.}\qquad\quad\ \ \\[-1.7ex]
	\phantom{.}\qquad\quad\ \ \\[-1.7ex]
	\makebox[0pt][l]{\qquad$\tau = \{x\}$}
	\phantom{.}\qquad\quad\ \ \\[-1ex]
	\includegraphics[height=29mm]{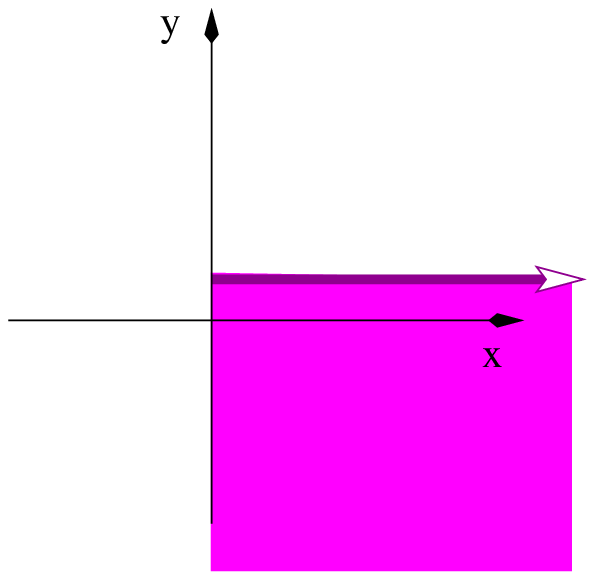}
	&\raisebox{3ex}{\pur$\!\cdot\!\cdot\!\cdot$}
	\end{array}
\ ,\,\quad
  \begin{array}{@{}c@{}}
	{\pur .}\qquad\quad\ \ \\[-1.7ex]
	{\pur .}\qquad\quad\ \ \\[-1.7ex]
	\makebox[0pt][l]{\qquad$\tau = \{y\}$}
	{\pur .}\qquad\quad\ \ \\[-1ex]
	\includegraphics[height=29mm]{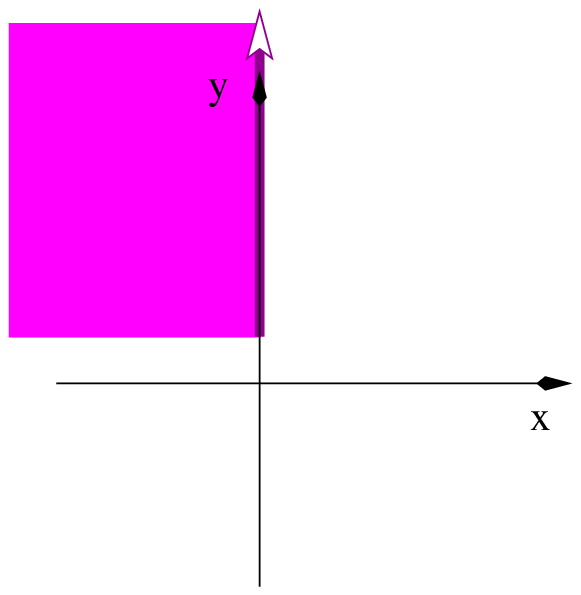}
	\end{array}
$$
in~$\RR^2$ on the left-hand side of this display are the indicator
subquotients for the intervals on the right-hand side, each labeled by
the relevant face~$\tau$.  Caution: this example is not to be confused
with Examples~\ref{e:hyperbola-GD}, \ref{e:hyperbola-PD}, \ref{e:PF},
and~\ref{e:PF'}, where the curve is a hyperbola whose asymptotes are
the two axes.  In contrast, here the upper boundary of the interval
has the vertical axis as an asymptote, whereas the horizontal axis is
exactly parallel to the positive end of the upper boundary.
\end{example}

\begin{lemma}\label{l:left-exact}
The kernel of any natural transformation between two exact covariant
functors is left-exact.  In more detail, if $\alpha$ and $\beta$ are
two exact covariant functors $\cA \to \cB$ for abelian categories
$\cA$ and~$\cB$, and $\gamma_X: \alpha(X) \to \beta(X)$ naturally for
all objects~$X$ of~$\cA$, then the association $X \mapsto \ker
\gamma_X$ is a left-exact covariant functor~$\cA \to \cB$.
\end{lemma}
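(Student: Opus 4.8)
The plan is to verify the three functor axioms directly, deducing each from the corresponding property of $\alpha$ and $\beta$ together with naturality of $\gamma$. First I would check that $X \mapsto \ker\gamma_X$ is a functor at all: given a morphism $f\colon X \to Y$ in $\cA$, the naturality square $\gamma_Y \circ \alpha(f) = \beta(f) \circ \gamma_X$ shows that $\alpha(f)$ carries $\ker\gamma_X$ into $\ker\gamma_Y$, since for $z \in \ker\gamma_X$ we get $\gamma_Y(\alpha(f)(z)) = \beta(f)(\gamma_X(z)) = 0$. Defining $(\ker\gamma)(f)$ to be the restriction of $\alpha(f)$ then makes $\ker\gamma$ a covariant functor, because $\alpha$ is one and restriction respects composition and identities. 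Note $\alpha$ being exact (in particular left-exact) is what makes $\ker\gamma_X$ a genuine subobject of $\alpha(X)$ so that this restriction makes sense.

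Next I would establish left-exactness: given a short exact sequence $0 \to X \to Y \to Z \to 0$ in $\cA$, I must show $0 \to \ker\gamma_X \to \ker\gamma_Y \to \ker\gamma_Z$ is exact. Apply $\alpha$ and $\beta$ to the sequence; both rows $0 \to \alpha(X) \to \alpha(Y) \to \alpha(Z) \to 0$ and $0 \to \beta(X) \to \beta(Y) \to \beta(Z) \to 0$ are short exact since $\alpha$ and $\beta$ are exact. The three maps $\gamma_X, \gamma_Y, \gamma_Z$ assemble into a commutative ladder between these two short exact sequences, by naturality. Now $\ker\gamma$ applied to the ladder is precisely the kernel row of this map of short exact sequences, and the snake lemma (or just a direct diagram chase) gives exactness of $0 \to \ker\gamma_X \to \ker\gamma_Y \to \ker\gamma_Z$: injectivity of $\ker\gamma_X \to \ker\gamma_Y$ is inherited from injectivity of $\alpha(X) \to \alpha(Y)$, and exactness at $\ker\gamma_Y$ follows because an element of $\ker\gamma_Y$ mapping to $0$ in $\ker\gamma_Z$ comes from a unique element of $\alpha(X)$ (by exactness of the $\alpha$-row), which then lies in $\ker\gamma_X$ by a short chase using injectivity of $\beta(X) \to \beta(Y)$ and commutativity of the ladder.

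I expect the only mildly delicate point to be the last chase—confirming that the preimage in $\alpha(X)$ of an element of $\ker\gamma_Y$ actually lands in $\ker\gamma_X$—but this is routine: if $x \in \alpha(X)$ maps to $y \in \ker\gamma_Y$, then $\beta(X\to Y)(\gamma_X(x)) = \gamma_Y(y) = 0$, and $\beta(X\to Y)$ is injective, so $\gamma_X(x) = 0$. No completeness, no exactness of $\ker\gamma$ on the right, and no additivity subtleties enter; everything needed is packaged in the hypotheses. One could alternatively phrase the whole argument as: $\ker\gamma$ is the equalizer in the functor category $[\cA,\cB]$ of $\gamma$ and the zero transformation $\alpha \Rightarrow \beta$, hence a kernel, and kernels of exact functors (viewed objectwise) are left-exact; but the hands-on ladder argument is self-contained and I would present that.
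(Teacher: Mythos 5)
Your proof is correct and takes the same route the paper intends: the paper's entire proof of this lemma is ``This can be checked by diagram chase or spectral sequence,'' and your naturality-square functoriality check followed by the ladder/snake-lemma chase is exactly that diagram chase written out in full.
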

\begin{proof}
This can be checked by diagram chase or spectral sequence.
\end{proof}

\begin{prop}\label{p:support-left-exact}
The global support functor\/ $\Gamma_{\!\tau\!}$ is left-exact.
\end{prop}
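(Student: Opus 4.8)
The plan is to deduce left-exactness of $\Gamma_{\!\tau}$ directly from Lemma~\ref{l:left-exact}, by exhibiting $\Gamma_{\!\tau}$ as the kernel of a natural transformation between two exact functors on the category of $Q$-modules. First I would recall from Definition~\ref{d:support} that $\Gamma_{\!\tau} M = \ker\bigl(M \to \prod_{\tau' \not\subseteq \tau} M_{\tau'}\bigr)$, so the two functors in question are the identity functor $\alpha = \id$ and the ``total localization away from~$\tau$'' functor $\beta$ defined by $\beta(M) = \prod_{\tau' \not\subseteq \tau} M_{\tau'}$, with the natural transformation $\gamma_M : M \to \beta(M)$ assembled from the canonical localization maps $M \to M_{\tau'}$.

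The key steps are then: (1) the identity functor on $Q$-modules is exact; (2) for each face $\tau'$, localization $M \mapsto M_\tau{}' = M \otimes_{\kk[Q_+]} \kk[Q_+ + \ZZ\tau']$ is exact, because $\kk[Q_+ + \ZZ\tau']$ is a localization of $\kk[Q_+]$ (inverting the elements of $\tau'$) and localization of a ring is a flat operation, so tensoring is exact and moreover preserves the $Q$-grading; (3) a product of exact functors is exact, since (arbitrary) products are exact in the category of $Q$-graded vector spaces — exactness can be checked degreewise, and in each degree a product of vector spaces is exact; hence $\beta$ is exact; (4) the localization maps $M \to M_{\tau'}$ are natural in $M$, so their product $\gamma_M$ is a natural transformation $\id \Rightarrow \beta$; (5) apply Lemma~\ref{l:left-exact} with $\cA = \cB =$ the category of $Q$-modules, $\alpha = \id$, to conclude that $M \mapsto \ker\gamma_M = \Gamma_{\!\tau} M$ is a left-exact covariant functor.

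The only genuinely substantive point is step (2) together with the claim in step (3) that infinite products are exact in the relevant abelian category; everything else is bookkeeping. I expect the main obstacle — if there is one — to be making sure the relevant category really is one in which arbitrary products exist and are exact. This is fine: a $Q$-module is the same as a $Q$-graded $\kk[Q_+]$-module (Lemma~\ref{l:Q-graded}), the product being formed degreewise, and in each graded piece one is taking a product of $\kk$-vector spaces, where exactness of products is immediate. One should also note that $\Gamma_{\!\tau}$ is visibly functorial to begin with (a homomorphism $M \to \cN$ carries $\ker\gamma_M$ into $\ker\gamma_{\cN}$ by naturality of $\gamma$), which is exactly what Lemma~\ref{l:left-exact} packages. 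No finiteness or polyhedrality hypothesis on $Q$ is needed here, which is worth remarking since the index set $\{\tau' \not\subseteq \tau\}$ may be infinite in general.
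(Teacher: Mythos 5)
Your proposal is correct and is exactly the paper's argument: apply Lemma~\ref{l:left-exact} to the natural transformation from the identity functor to the product of localizations $M \mapsto \prod_{\tau' \not\subseteq \tau} M_{\tau'}$, using exactness of localization and of degreewise products. The extra detail you supply (flatness of $\kk[Q_+ + \ZZ\tau']$, degreewise exactness of arbitrary products, and the observation that no polyhedrality is needed) merely fills in what the paper leaves implicit.
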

\begin{proof}
Use Lemma~\ref{l:left-exact}: global support is the kernel of the
natural transformation from the identity to a direct product of
localizations.
\end{proof}

\begin{prop}\label{p:support-localizes}
For modules over a polyhedral partially ordered group, localization
commutes with taking support: $(\Gamma_{\!\tau'} M)_\tau =
\Gamma_{\!\tau'}(M_\tau)$, and both sides are~$0$ unless~$\tau'
\supseteq \tau$.
\end{prop}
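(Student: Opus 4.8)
The plan is to reduce the claim to the following two facts about a polyhedral partially ordered group: (1) localization at $\tau$ is exact, being a tensor product with a flat $\kk[Q_+]$-module, $\kk[Q_+ + \ZZ\tau]$ (it is a localization of the monoid algebra, hence flat); and (2) for two faces $\tau$ and $\tau'$, the double localization $M_{\tau'\tau} := (M_{\tau'})_\tau$ depends only on the smallest face containing $\tau\cup\tau'$ — more precisely, $\kk[Q_+ + \ZZ\tau']\otimes_{\kk[Q_+]}\kk[Q_+ + \ZZ\tau] = \kk[Q_+ + \ZZ\tau' + \ZZ\tau]$, and $Q_+ + \ZZ\tau' + \ZZ\tau = Q_+ + \ZZ\upsilon$ where $\upsilon$ is the face generated by $\tau\cup\tau'$ (polyhedrality guarantees this face exists and the monoid on the right is the one obtained by inverting $\upsilon$). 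In particular, if $\tau\not\subseteq\tau'$ then $M_{\tau'\tau}$ inverts elements outside $\tau'$, so it agrees with a localization along a face strictly larger than $\tau'$; iterating, $M_{\tau'\tau_1\cdots\tau_k}$ stabilizes and is $M$ localized along the face generated by $\tau'\cup\tau_1\cup\cdots\cup\tau_k$.

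With these in hand, I would compute both sides. For the right side, $\Gamma_{\!\tau'}(M_\tau) = \bigcap_{\tau''\not\subseteq\tau'}\ker\bigl(M_\tau \to (M_\tau)_{\tau''}\bigr) = \bigcap_{\tau''\not\subseteq\tau'}\ker\bigl(M_\tau \to M_{\tau\tau''}\bigr)$. For the left side, since localization at $\tau$ is exact it commutes with the finite intersection of kernels defining $\Gamma_{\!\tau'}M$ (exactness turns the intersection of kernels, i.e.\ the kernel of the map into the finite product, into the kernel of the localized map into the localized product), so $(\Gamma_{\!\tau'}M)_\tau = \bigcap_{\tau''\not\subseteq\tau'}\ker\bigl(M_\tau \to (M_{\tau''})_\tau\bigr) = \bigcap_{\tau''\not\subseteq\tau'}\ker\bigl(M_\tau \to M_{\tau''\tau}\bigr)$. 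Since $M_{\tau\tau''} = M_{\tau''\tau}$ (tensor product is symmetric over $\kk[Q_+]$), the two intersections are literally the same, giving the asserted equality.

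For the vanishing statement, suppose $\tau\not\subseteq\tau'$. Then $\tau$ itself is one of the faces $\tau''$ in the intersection $\bigcap_{\tau''\not\subseteq\tau'}\ker(M_\tau\to M_{\tau''\tau})$; and $M_{\tau\tau} = (M_\tau)_\tau = M_\tau$ because localizing twice along $\tau$ does nothing (the module $\kk[Q_++\ZZ\tau]$ is already a $\kk[Q_++\ZZ\tau]$-algebra, so the second tensor is an isomorphism). Hence that particular kernel is $\ker(M_\tau\xrightarrow{\ \id\ }M_\tau) = 0$, and the whole intersection — equivalently both sides of the proposition — vanishes. So $\Gamma_{\!\tau'}(M_\tau)\ne 0$ forces $\tau\subseteq\tau'$, which is the contrapositive of the stated condition.

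The main obstacle I anticipate is the bookkeeping in fact (2): verifying that $Q_+ + \ZZ\tau' + \ZZ\tau$ really equals $Q_+$ with the face $\upsilon = \langle\tau\cup\tau'\rangle$ inverted, and that no subtlety creeps in from the group possibly having torsion or the cone being non-finitely-generated. Polyhedrality is what rescues this — it ensures $\langle\tau\cup\tau'\rangle$ is again a face — but one must still check that inverting $\tau$ and $\tau'$ separately is the same as inverting their join, which comes down to the elementary monoid fact that $Q_+\minus\upsilon$ is an ideal and every element of $\upsilon$ becomes a unit in $\kk[Q_++\ZZ\tau+\ZZ\tau']$. Once that lemma is isolated and proved, the rest of the argument is just the exactness-plus-symmetry manipulation above.
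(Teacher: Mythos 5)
Your proposal is correct and follows essentially the same route as the paper: exactness of localization to pass it through the (finitely many, by polyhedrality) kernels, the symmetry $(M_{\tau''})_\tau = (M_\tau)_{\tau''}$ to identify the two intersections, and the observation that $\tau'' = \tau$ appears in the intersection with $(M_\tau)_\tau = M_\tau$ to get the vanishing. The only difference is that your ``fact (2)'' about the join face $\langle\tau\cup\tau'\rangle$ is more than you need---plain commutativity of the tensor product already gives the required symmetry---so the obstacle you flag at the end can be bypassed entirely.
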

\begin{proof}
Localization along~$\tau$ is exact, so
$$%
  \ker(M \to M_{\tau''})_\tau
  =
  \ker\bigl(M_\tau \to (M_{\tau''})_\tau\bigr)
  =
  \ker\bigl(M_\tau \to (M_\tau)_{\tau''}\bigr).
$$
Since localization along~$\tau$ commutes with finite intersections of
submodules, $(\Gamma_{\!\tau'} M)_\tau$ is the intersection of the
leftmost of these modules over the faces $\tau'' \not\subseteq \tau'$,
of which there are only finitely many by the polyhedral hypothesis.
But $\Gamma_{\!\tau'}(M_\tau)$ equals the same intersection of the
rightmost of these modules by definition.  And if $\tau' \not\supseteq
\tau$ then one of these $\tau''$ equals~$\tau$, so $M_\tau \to
(M_\tau)_{\tau''} = M_\tau$ is the identity map, whose
kernel~is~$0$.
\end{proof}

\begin{remark}\label{r:commutes}
It is miraculous that localization commutes with taking support over
general polyhedral partially ordered groups, because localization does
not commute with taking relevant Hom functors in this setting.
Indeed, this commutativity failure occurs even when the source module
is a quotient $\kk[D] = \kk[Q_+ \minus U] = \kk[Q_+]/I$ modulo a
graded ideal $I = \kk[U]$ of the monoid algebra~$\kk[Q_+]$ of the
positive cone; see \cite[Remark~4.22]{essential-real}
for an explanation with examples.  The problem comes down to the
homogeneous prime ideals of the monoid algebra $\kk[Q_+]$ not being
finitely generated, so the quotient $\kk[\tau]$ fails to be finitely
presented.
However, taking the colimit of Hom functors of the form
$\Hom(\kk[D],M)$ for $\tau$-coprimary downsets~$D \subseteq Q_+$
eliminates the failure of commutativity.
\end{remark}

\begin{defn}\label{d:local-support}
Fix a $Q$-module $M$ for a polyhedral partially ordered
group~$Q$.  The \emph{local $\tau$-support} of~$M$ is the module
$\Gamma_{\!\tau} M_\tau$ of elements globally supported on~$\tau$ in
the localization~$M_\tau$, or equivalently (by
Proposition~\ref{p:support-localizes}) the localization along~$\tau$
of the submodule of~$M$ globally supported on~$\tau$.
\end{defn}

\begin{defn}\label{d:coprimary}%
A module $M$ over a polyhedral partially ordered group is
\emph{coprimary} if for some face~$\tau$, the localization map $M
\into M_\tau$ is injective and $\Gamma_{\!\tau} M_\tau$ is an
essential submodule of~$M_\tau$, meaning every nonzero submodule
of~$M_\tau$ intersects $\Gamma_{\!\tau}
M_\tau$~\mbox{nontrivially}.
\end{defn}

\begin{remark}\label{r:unique-face}
It is easy to check that over any polyhedral partially ordered group,
if a module $E$ is coprimary then it is $\tau$-coprimary for a unique
face~$\tau$ of~$Q$.
\end{remark}

The coprimary concept has an elementary, intuitive formulation in the
language of persistence, when the ambient partially ordered group is
polyhedral and closed.

\begin{defn}\label{d:elementary-coprimary}
Fix a face~$\tau$ of the positive cone~$Q_+$ in a partially ordered
group~$Q$.  A homogeneous element $y \in M_q$ in a $Q$-module~$M$ is
\begin{enumerate}
\item%
\emph{$\tau$-persistent} if it has nonzero image in $M_{q'}$ for all
$q' \in q + \tau$;

\item%
\emph{$\ol\tau$-transient} if, for each $f \in Q_+ \minus \tau$, the
image of~$y$ vanishes in $M_{q'}$ whenever $q' = q + \lambda f$ for
$\lambda \gg 0$;

\item%
\emph{$\tau$-coprimary} if it is $\tau$-persistent and
$\ol\tau$-transient.
\end{enumerate}
\end{defn}

\begin{remark}\label{r:ZZn-coprimary}
It is an interesting exercise to check that every element of a
coprimary module is coprimary when the polyhedral partially ordered
group is discrete (Example~\ref{e:discrete-polyhedral}) and closed
(Definition~\ref{d:closed}).
\end{remark}

\begin{thm}\label{t:elementary-coprimary}
Fix a $Q$-module~$M$ and a face~$\tau$ of the positive
cone~$Q_+$ in a closed polyhedral partially ordered group~$Q$.
The module~$M$ is $\tau$-coprimary if and only if every homogeneous
element divides a $\tau$-coprimary element, where $y \in M_q$
\emph{divides} $y' \in M_{q'}$ if $q \preceq q'$ and $y$ has
image~$y'$ under the structure morphism $M_q \to M_{q'}$.
\end{thm}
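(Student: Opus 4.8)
The plan is to prove both implications by carefully translating between the ``global'' characterization of coprimary modules in Definition~\ref{d:coprimary} and the ``pointwise'' characterization in Definition~\ref{d:elementary-coprimary}, using Proposition~\ref{p:support-localizes} to commute localization past support and Definition~\ref{d:closed} (in the form worked out in Example~\ref{e:support}) to replace ``dies along every ray outside $\tau$'' by ``dies along every element of $Q_+\minus\tau$''. First I would record the elementary observation that a homogeneous element $y\in M_q$ is $\tau$-persistent if and only if it has nonzero image in $M_\tau$, and that once $y$ survives in $M_\tau$, being $\ol\tau$-transient is equivalent to its image lying in $\Gamma_{\!\tau}(M_\tau)$: this is exactly the content of Example~\ref{e:support} applied to the localization $M_\tau$ (whose structure as a module over $Q$ is unchanged), since $\Gamma_{\!\tau}$ is defined as the intersection of kernels of localizations $M_\tau\to (M_\tau)_{\tau'}$ over $\tau'\not\subseteq\tau$, and the closed hypothesis lets one test this one ray at a time, hence one element $f\in Q_+\minus\tau$ at a time. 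Thus ``$y$ is $\tau$-coprimary'' becomes ``the image of $y$ in $M_\tau$ is a nonzero element of $\Gamma_{\!\tau}(M_\tau)$''.

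With that dictionary in place, the forward direction goes as follows. Assume $M$ is $\tau$-coprimary, so $M\into M_\tau$ is injective and $\Gamma_{\!\tau}(M_\tau)$ is essential in $M_\tau$. Take any homogeneous $y\in M_q$, $y\ne 0$; its image $\bar y$ in $M_\tau$ is nonzero by injectivity. The cyclic submodule of $M_\tau$ generated by $\bar y$ is nonzero, so by essentiality it meets $\Gamma_{\!\tau}(M_\tau)$ nontrivially: there is $f\in Q_+$ with $\kk[Q_+]$-multiple $x^f\bar y$ a nonzero element of $\Gamma_{\!\tau}(M_\tau)$ in degree $q+f$. But $\Gamma_{\!\tau}(M_\tau)$ is a submodule, hence also $x^g\bar y$ lands there for all $g\succeq f$; and since $M_\tau$ is the localization of $M$ along $\tau$, after possibly translating $f$ by an element of $\ZZ\tau$ we may assume $q+f$ already lies in the image of $M$, i.e.\ there is a genuine homogeneous element $y'\in M_{q+f}$ with $y\mapsto y'$ under $M_q\to M_{q+f}$ and $y'\ne 0$ (again using injectivity of $M\into M_\tau$). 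Then $\bar{y'}=x^f\bar y\in\Gamma_{\!\tau}(M_\tau)$ is nonzero, so by the dictionary $y'$ is $\tau$-coprimary, and $y$ divides it. (One small point to check: an arbitrary element of $\Gamma_{\!\tau}(M_\tau)$ in the image of $M$ need not itself be $\ol\tau$-transient as an element of $M$ in the strong pointwise sense — but it is as an element of $M_\tau$, and $\ol\tau$-transience of $y'$ in $M$ follows because the maps $M_{q'}\to (M_\tau)_{q'}$ are injective for $q'$ large along any $f\in Q_+\minus\tau$, since such $q'$ eventually lie in the image of $M$; one argues that vanishing of $\bar{y'}$ in $(M_\tau)_{q+f+\lambda f}$ forces vanishing of $y'$ there.)

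For the converse, assume every homogeneous element of $M$ divides a $\tau$-coprimary element. First, injectivity of $M\into M_\tau$: if $y\in M_q$ maps to $0$ in $M_\tau$ but $y\ne 0$, then $y$ divides some $\tau$-coprimary $y'\in M_{q+f}$, which is then $\tau$-persistent, hence nonzero in $M_\tau$; but $y'$ is a structure-map image of $y$, so its image in $M_\tau$ is $x^f$ times the image of $y$, namely $0$ — contradiction. Next, essentiality of $\Gamma_{\!\tau}(M_\tau)$ in $M_\tau$: given a nonzero submodule $N\subseteq M_\tau$, pick a nonzero homogeneous $z\in N_{q}$; because $M\into M_\tau$ is injective and $M_\tau=\dirlim$ of shifts of $M$, after multiplying $z$ by some $x^g$ (which stays in $N$) we may assume $z$ is the image of a genuine homogeneous element $y\in M_{q'}$ with $y\ne 0$. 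By hypothesis $y$ divides a $\tau$-coprimary $y'\in M_{q''}$, whose image $\bar{y'}\in M_\tau$ is a nonzero element of $\Gamma_{\!\tau}(M_\tau)$ by the dictionary; and $\bar{y'}$ is a $\kk[Q_+]$-multiple of $z$, hence lies in $N$. So $N\cap\Gamma_{\!\tau}(M_\tau)\ne 0$, as required.

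The main obstacle I expect is the bookkeeping in passing between $M$ and its localization $M_\tau$: specifically, making rigorous the repeated moves ``a nonzero homogeneous element of $M_\tau$ becomes, after a suitable $\tau$-translation, the image of a nonzero genuine element of $M$'' and ``$\ol\tau$-transience can be detected after localizing along $\tau$''. Both rest on the same two facts — that $M_\tau$ is a filtered colimit of degree-shifts of $M$ along $\tau$, so every homogeneous element of $M_\tau$ is a $\ZZ\tau$-shift of an image from $M$; and that for $q'$ pushed far along any $f\in Q_+\minus\tau$ the localization map $M_{q'}\to(M_\tau)_{q'}$ is injective, so vanishing upstairs is equivalent to vanishing downstairs — which is where the closed polyhedral hypothesis (via Example~\ref{e:support} and Proposition~\ref{p:support-localizes}) does its real work. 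Once those two lemmas are isolated, each implication is a short diagram-and-essentiality chase of the kind sketched above.
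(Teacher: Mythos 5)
Your proposal is correct and follows essentially the same route as the paper's proof: $\tau$-persistence from injectivity of $M \into M_\tau$, production of a coprimary multiple from essentiality of $\Gamma_{\!\tau} M_\tau$, the closed hypothesis (via Example~\ref{e:support}) to convert ``supported on~$\tau$'' into ``$\ol\tau$-transient'', and the reverse implications for the converse; you merely make explicit the colimit bookkeeping between $M$ and $M_\tau$ that the paper leaves implicit. The one misstated justification is in your parenthetical: the maps $M_{q'} \to (M_\tau)_{q'}$ are injective not because $q'$ is ``large along~$f$'' or lies in the image of~$M$, but simply for every~$q'$, since at that point in the argument $M \into M_\tau$ is injective by the coprimary hypothesis.
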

\begin{proof}
If $M$ is $\tau$-coprimary and $y \in M_q$ is a nonzero
homogeneous element, then $y$ is $\tau$-persistent because $M$ is a
submodule of~$M_\tau$ on which $\kk[\ZZ\tau]$ acts freely.  On the
other hand, $y$ divides a $\ol\tau$-transient element because
$\Gamma_{\!\tau} M_\tau$ is an essential submodule of~$M_\tau$:
the submodule of $M_\tau$ generated by~$y$ intersects
$\Gamma_{\!\tau} M_\tau$ nontrivially.
The closed hypothesis on~$Q$ implies that an element supported
on~$\tau$ is $\ol\tau$-transient, as in Example~\ref{e:support}.

The other direction does not require the closed hypothesis.  Assume
that every homogeneous element of~$M$ divides a $\tau$-coprimary
element.  The graded component of the localization~$M_\tau$ in
degree $q \in Q$ is the direct limit of~$M_q'$ over $q' \in q +
\tau$.  If $y \in M_q$ lies in $\ker(M \to M_\tau)$, then the
image of~$y$ must vanish in some~$M_{q'}$, whence $y = 0$ to begin
with, by $\tau$-persistence.  On the other hand, that $\Gamma_{\!\tau}
M_\tau$ is an essential submodule of~$M_\tau$ follows because
every $\ol\tau$-transient element is supported on~$\tau$.
\end{proof}


\section{Primary decomposition of modules}\label{s:prim-decomp}

\begin{defn}\label{d:primDecomp'}
Fix a $Q$-module $M$ over a polyhedral partially ordered group~$Q$.
A~\emph{primary decomposition} of~$M$ is an injection $M \into
\bigoplus_{i=1}^r M/M_i$ into a direct sum of coprimary quotients
$M/M_i$, called \emph{components} of the decomposition.
\end{defn}

\begin{remark}\label{r:primary}
Primary decomposition is usually phrased in terms of \emph{primary
submodules} $M_i \subseteq M$, which by definition have coprimary
quotients~$M/M_i$, satisfying $\bigcap_{i=1}^r M_i = 0$
in~$M$.  This is equivalent to Definition~\ref{d:primDecomp'}.
\end{remark}

\begin{example}\label{e:prim-decomp-downset}
By Theorem~\ref{t:elementary-coprimary}, a primary decomposition $D =
\bigcup_{i=1}^r \hspace{-1pt}D_i$ of a downset~$D$ yields a primary
decomposition of the corresponding indicator quotient, namely $\kk[D]
\into \bigoplus_{i=1}^r \kk[D_i]$ induced by the surjections $\kk[D]
\onto \kk[D_i]$.  See, Example~\ref{e:hyperbola-PD}, for instance.
\end{example}

\begin{example}\label{e:global-support'}
The interval module in Example~\ref{e:global-support} has a primary
decomposition
$$%
\psfrag{x}{\tiny$x$}
\psfrag{y}{\tiny$y$}
  \kk\!
  \left[
  \begin{array}{@{\!}c@{\!\!}}\includegraphics[height=29mm]{decomp}\end{array}
  \right]
\:\into\ 
  \kk\!
  \left[
  \begin{array}{@{\!}c@{\!\!}}\includegraphics[height=29mm]{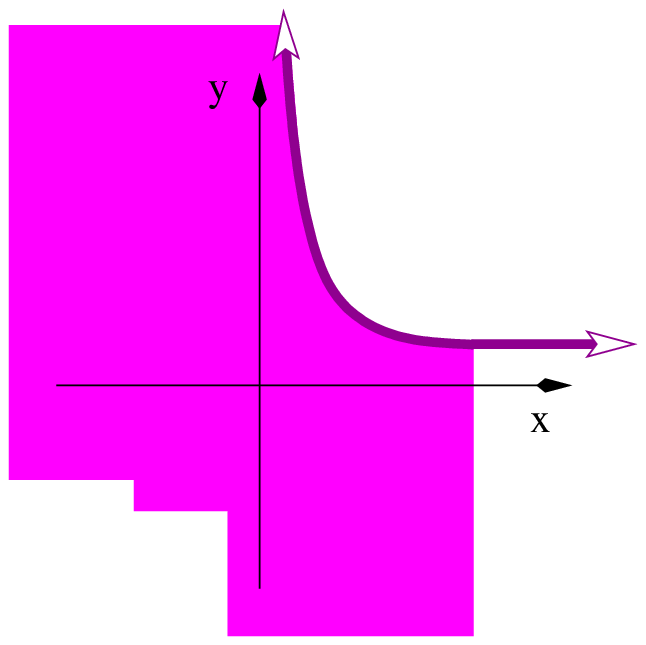}\end{array}
  \right]
\oplus\,
  \kk\!
  \left[
  \begin{array}{@{\!}c@{\!\!}}\includegraphics[height=29mm]{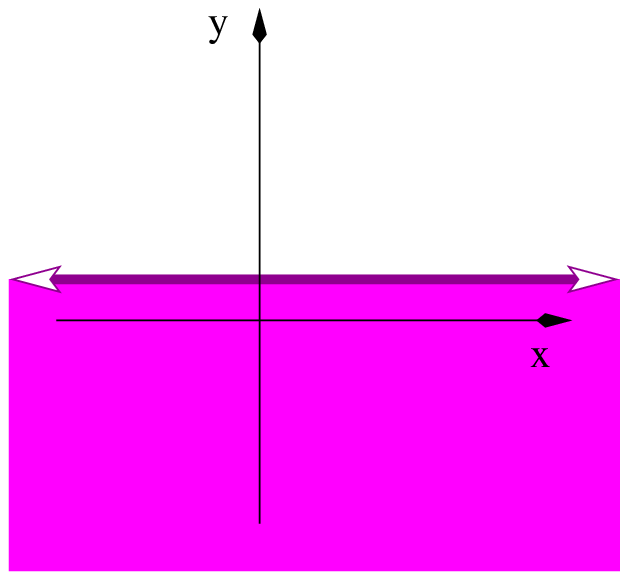}\end{array}
  \right]
\oplus\,
  \kk\!
  \left[
  \begin{array}{@{\!}c@{\!\!\!\!}}\includegraphics[height=29mm]{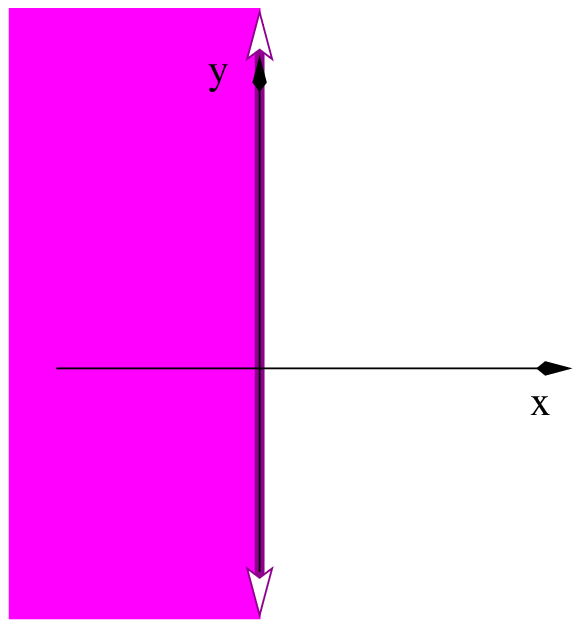}\end{array}
  \right]
$$
in which the global support along each face is extended downward so as
to become a quotient instead of a submodule of the original interval
module.
\end{example}

Primary decomposition requires some notion of finiteness, both for the
ambient context and for the module being decomposed.  In usual
commutative algebra, the noetherian condition serves both purposes.
Here, the closed polyhedral condition provides ambient finiteness, and
the following takes care of modules.

\begin{defn}\label{d:downset-hull}
A \emph{downset hull} of a module~$M$ over an arbitrary poset is an
injection $M \into \bigoplus_{j \in J} E_j$ with each $E_j$ being a
downset module.  The hull is \emph{finite} if $J$ is~finite.  The
module~$M$ is \emph{downset-finite} if it admits a finite downset
hull.
\end{defn}

\begin{remark}\label{r:tame}
The existence of primary decomposition in Theorem~\ref{t:primDecomp}
is intended for modules that are \emph{tame} \cite[Definitions~2.6
and~2.11]{hom-alg-poset-mods}.  Roughly speaking, each such module is
constant on finitely many regions that partition the poset.  However,
because primary decomposition deals only with essential submodules and
nothing akin to generators---in the pictures, only phenomena near the
upper boundary matter, not anything near the lower boundary---it only
requires the downset half of the tame condition.  In contrast, the
tame condition is equivalent (by the syzygy theorem for modules over
posets \cite[Theorem~6.12]{hom-alg-poset-mods}) to requiring a finite
fringe presentation \cite[Definition~3.16]{hom-alg-poset-mods}, which
entails a finite upset cover in addition to a finite downset hull.  In
general, if the monoid algebra $\kk[Q_+]$ is noetherian, then for
$Q$-modules,
$$%
  \text{noetherian} \implies \text{tame} \implies \text{downset-finite}.
$$
\end{remark}

\begin{example}\label{e:tame}
The implications in Remark~\ref{r:tame} are strict even when
$\kk[Q_+]$ is noetherian.  The upset module $\kk\{x^a y^b \mid a + b
\geq 0\}$ for the half-plane above the antidiagonal line is a tame but
not noetherian $\ZZ^2$-module.  For a downset-finite but not tame
$\ZZ^2$-module, take the submodule of $\kk[\ZZ^2] \oplus \kk[\ZZ^2]$
that has degree $\bigl[\twoline ab\bigr]$ component
$$%
\begin{cases}
\quad\ \ 0 & \text{below the antidiagonal line } a + b = 1,
\\
\quad\ \ \kk^2 & \text{above the antidiagonal line } a + b = 1,
\\
\mathrm{span}(\bigl[\twoline ab\bigr]) & \text{on the antidiagonal line } a + b = 1.
\end{cases}
$$
This module is visibly downset-finite---it is a submodule of a direct
sum of two copies of the downset module~$\kk[\ZZ^2]$---but it is not
tame \cite[Example~4.25]{hom-alg-poset-mods}.
\end{example}

\begin{thm}\label{t:primDecomp}
Every downset-finite module over a polyhedral partially ordered group
admits a primary decomposition.
\end{thm}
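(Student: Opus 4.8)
The plan is to bootstrap from the downset case, Corollary~\ref{c:PF}, using the finite downset hull guaranteed by downset-finiteness (Definition~\ref{d:downset-hull}), and then to replace the resulting coprimary modules by coprimary \emph{quotients} of the given module. The one new ingredient needed for the last step is that a nonzero submodule $M'$ of a $\tau$-coprimary module $C$ is again $\tau$-coprimary. To see this, note first that localization along a face is exact (each graded piece of $M'_\tau$ is a filtered colimit of graded pieces of $M'$; equivalently, $\kk[Q_{+}+\ZZ\tau]$ is a localization of $\kk[Q_{+}]$, hence flat), so the inclusion $M' \into C$ localizes to an inclusion $M'_\tau \into C_\tau$; since $M' \to M'_\tau \to C_\tau$ agrees with the injection $M' \to C \to C_\tau$, the map $M' \into M'_\tau$ is injective. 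For essentiality, exactness of localization along each $\tau' \not\subseteq \tau$ gives $\ker(M'_\tau \to (M'_\tau)_{\tau'}) = M'_\tau \cap \ker(C_\tau \to (C_\tau)_{\tau'})$, and intersecting over the (finitely many) such $\tau'$ yields $\Gamma_{\!\tau} M'_\tau = M'_\tau \cap \Gamma_{\!\tau} C_\tau$. Hence any nonzero submodule $N \subseteq M'_\tau \subseteq C_\tau$ meets $\Gamma_{\!\tau} C_\tau$ nontrivially by essentiality in $C_\tau$, and $N \cap \Gamma_{\!\tau} C_\tau = N \cap \Gamma_{\!\tau} M'_\tau$.

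Next, package the downset case. Unwinding Definition~\ref{d:coprimary} through Remark~\ref{r:monomial-localization} (localization of $\kk[D]$ along $\tau$ is $\kk[D_\tau]$) and Example~\ref{e:Gamma} ($\Gamma_{\!\tau}\kk[D] = \kk[\Gamma_{\!\tau}D]$) shows that the indicator module $\kk[D']$ of any $\tau$-coprimary downset $D'$ is a $\tau$-coprimary module: the equalities $D' = D'_\tau$ and $D' = \Gamma_{\!\tau}(D') - Q_+$, both forced by $D' = P_\tau(D') = \Gamma_{\!\tau}(D'_\tau) - Q_+$, translate respectively into injectivity of $\kk[D'] \into \kk[D']_\tau$ and essentiality of $\Gamma_{\!\tau}\kk[D']_\tau$ in $\kk[D']_\tau$. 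Combining this with Corollary~\ref{c:PF}, every downset $D$ fits into an injection $\kk[D] \into \bigoplus_\tau \kk[P_\tau(D)]$ induced by the surjections $\kk[D] \onto \kk[P_\tau(D)]$, into a \emph{finite} direct sum (finiteness being the polyhedral hypothesis on $Q$) of coprimary modules, as noted in Example~\ref{e:prim-decomp-downset}.

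Now for the theorem itself: fix a finite downset hull $M \into \bigoplus_{j\in J}\kk[D_j]$ and compose with the injections of the previous paragraph to obtain an injection $M \into \bigoplus_{j\in J}\bigoplus_\tau \kk[P_\tau(D_j)]$ into a finite direct sum of coprimary modules; write it as $M \into \bigoplus_{k=1}^{s} C_k$. Let $M_k$ be the kernel of the $k$-th component $M \to C_k$. Then $M/M_k$ embeds in $C_k$, so by the submodule lemma it is either zero or coprimary (for the same face as $C_k$), while $\bigcap_{k} M_k = \ker(M \into \bigoplus_k C_k) = 0$. Discarding the indices with $M_k = M$, the induced map $M \into \bigoplus_k M/M_k$ is injective, i.e., it is a primary decomposition in the sense of Definition~\ref{d:primDecomp'}.

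Given Corollary~\ref{c:PF}, this argument is soft; I expect the only point requiring care to be the submodule lemma, specifically the identification $\Gamma_{\!\tau} M'_\tau = M'_\tau \cap \Gamma_{\!\tau} C_\tau$, which is exactly where exactness of localization (and finiteness of the face set) enters. It is also worth flagging where the two finiteness hypotheses are used: polyhedrality bounds the number of $\tau$-primary components of each $\kk[D_j]$, while downset-finiteness bounds the number of $D_j$, so the resulting decomposition has at most $s \le |J|\cdot\#\{\text{faces of }Q\}$ components and in particular is finite.
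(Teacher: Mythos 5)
Your proposal is correct and follows essentially the same route as the paper: take a finite downset hull, apply the canonical primary decomposition of each downset module from Corollary~\ref{c:PF} (finite by polyhedrality), and define the primary components as kernels of the maps to the coprimary summands, using the fact that a submodule of a $\tau$-coprimary module is $\tau$-coprimary. The only differences are cosmetic --- the paper groups all $\tau$-coprimary summands into a single component $E^\tau$ per face, whereas you keep one component per summand --- and you usefully spell out the submodule lemma and the direct verification that $\kk[P_\tau(D)]$ is coprimary, steps the paper asserts or delegates to Example~\ref{e:prim-decomp-downset}.
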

\begin{proof}
If $M \into \bigoplus_{j=1}^k E_j$ is a downset hull of the
module~$M$, and $E_j \into \bigoplus_{i=1}^\ell E_{ij}$ is a primary
decomposition for each~$j$ afforded by Corollary~\ref{c:PF} and
Example~\ref{e:prim-decomp-downset}, then let $E^\tau$ be the direct
sum of the downset modules $E_{ij}$ that are $\tau$-coprimary.  Set
$M^\tau = \ker(M \to E^\tau)$.  Then $M/M^\tau$ is coprimary,
being a submodule of a coprimary module.  Moreover, $M \to
\bigoplus_{\tau} M/M^\tau$ is injective because its kernel is the
same as the kernel of $M \to \bigoplus_{ij} E_{ij}$, which is a
composite of two injections and hence injective by construction.
Therefore $M \to \bigoplus_{\tau} M/M^\tau$ is a primary
decomposition.
\end{proof}

\begin{example}\label{e:circular-cone}
The finiteness of primary decomposition depends on the polyhedral
condition that posits finiteness of the number of faces of the
positive cone (Definition~\ref{d:face}).  When the positive cone has
infinitely many faces, such as the positive half~$Q_+$ of the right
circular cone $x^2 + y^2 \leq z^2$ in~$Q = \RR^3$, the $Q$-module
$$%
  \kk[\del Q_+] = \kk[Q_+]/\kk[Q_+^\circ]
$$
does not admit a finite primary decomposition.  The module $M =
\kk[\del Q_+]$ has a vector space of dimension~$1$ on the boundary of
the positive cone and~$0$ elsewhere.  Every face of the positive cone
must get its own summand $M/M_i$ in Definition~\ref{d:primDecomp'} for
the homomorphism $M \to \bigoplus_{i=1}^r M/M~ _i$ there to be
injective, and in that case the infinite number of faces would force
the direct sum to become a direct product.  This particular example,
with the right circular cone, works as well in the discrete partially
ordered group~$\ZZ^3$ because the circle has infinitely many
rational~points.
\end{example}

\enlargethispage*{3.5ex}
\vspace{-.6ex}
\addtocontents{toc}{\protect\setcounter{tocdepth}{2}}

\vspace{-1.5ex}

\end{document}